\numberwithin{equation}{section}
\newtheorem{theorem}{Theorem}[section]
\begin{document}
\author{Alexander E. Patkowski}
\title{On curious properties of the general size-biased distribution}

\maketitle

\begin{abstract} We offer further results on a general size-biased distribution related to the Riemann xi-function we presented in [9] using the work of Ferrar. Curious properties associated with its expected value are presented, which are related to special functional equations. We also relate our observations to some recent developments related to the Riemann hypothesis.\end{abstract}

\keywords{\it Keywords: \rm Size-biased distribution; Riemann xi-function; Probability}

\subjclass{ \it 2020 Mathematics Subject Classification 11M06, 60E07.}

\section{Introduction} 
Let  $\zeta(s)=\sum_{n=1}^{\infty}n^{-s}$ be the Riemann zeta function for $\Re(s)>1,$ and $\Gamma(s)$ the classical gamma function. The Riemann xi-function is given as [11, pg.16, eq.(2.1.12)] $\xi(s):=\frac{1}{2}s(s-1)\pi^{-\frac{s}{2}}\Gamma(\frac{s}{2})\zeta(s).$ The functional equation for this function is [11, pg.16, eq.(2.1.13)] $\xi(s)=\xi(1-s),$ which may be utilized to establish an example of the functional relationship studied by Ferrar [6]. In [9] we defined the density function $x^{-1}v_k(x),$ for $0<c<1$
\begin{equation} v_k(x)=\frac{1}{2\pi i}\int_{(c)}2^{k}\xi^k(s)x^{s-1}ds.\end{equation} To see  [2, p. 34, F2], note that $\xi(0)^k=\xi^k(1)=2^{-k}$ (e.g. [1, pg.439]). In the case of $k=1$ we obtain the familiar Riemann xi integral from [5, pg.207--208], [5, pg.228], or the probabilistic density function contained in [1],

\begin{equation}\int_{0}^{\infty}x^{s-1}\Theta(x)dx=\xi(s),\end{equation}
where 
\begin{equation}\Theta(x):=2x^2\sum_{n\ge1}(2\pi^2 n^4x^2-3\pi n^2)e^{-\pi n^2 x^2}.\end{equation} 

\par The functional equation $x\Theta(x)=\Theta(x^{-1})$ translates to the Riemann xi functional equation when applying Mellin transforms. This underlying observation was key in establishing the sized-biased distribution property (see [1, pg.439]) in the case $k=1.$  In other words, we have the expected value ([2, pg.65--66]) $\mathop{\mathbb{E}(X_1f(X_1))}=\mathop{\mathbb{E}(f(\frac{1}{X_1}))},$ for any measurable function $f,$ for random variable $X_1$ associated with this distribution.  If we denote the random variable $X_k$ corresponding to the density function $x^{-1}v_k(x),$ then we have [9, Theorem 1.1] the size biased property $$\mathop{\mathbb{E}\left(X_kf(X_k)\right)}=\mathop{\mathbb{E}\left(f(\frac{1}{X_k})\right)}.$$ Iterating Parseval's theorem for Mellin transforms [10 pg.83, eq.(3.1.12)], we established in [9] that $v_k(x)>0$ for $k\ge1.$ Furthmore, in [9, Theorem 1.2] we also noted that $X_{k-1}$ majorizes $X_{k}$ in distribution. Since we only outlined a possible proof therein, we will give details here to establish this result. Following Edwards [5, pg.228], we write (1.1) as 
\begin{equation} v_k(x)=\frac{1}{\pi}\int_{-\infty}^{\infty}2^{k}\xi^k(\frac{1}{2}+it)x^{it-1/2}dt.\end{equation}
We use the estimate from Titchmarsh [11, pg.257] \begin{equation}\xi(\frac{1}{2}+it)=O(|t|^Ae^{-\pi |t|/4}),\end{equation} $A>0,$ as $t\rightarrow\pm\infty,$ to justify interchange of integration to obtain the power series from (1.4) as 
\begin{equation}2^{-k}x^{1/2}v_k(x)=\sum_{n\ge0}c_{n,k}(i\log(x))^{n},\end{equation}
where
$$c_{n,k}=\frac{1}{\pi n!}\int_{-\infty}^{\infty}\xi(\frac{1}{2}+it)^kt^{n}dt.$$ To establish that $P(X_{k}\ge x)\le P(X_{k-1}\ge x)$ it is enough to establish that $v_{k-1}(x)\ge v_k(x),$ or equivalently $c_{n,k-1}\ge c_{n,k}.$ First, we observe that applying the estimate (1.5)
$$\begin{aligned} &c_{n,k}\le \frac{C}{\pi n!}\int_{0}^{\infty}|t|^{n+Ak}e^{-\pi k|t|/4}dt \\
&=\frac{C\Gamma(n+Ak+1)}{\pi n!}(\frac{k\pi}{4})^{-(n+Ak+1)}, \end{aligned}$$
for a constant $C.$ Now $c_{n,k-1}\ge c_{n,k},$ will be established if 
$$\frac{\Gamma(n+A(k-1)+1)}{\pi n!}\left(\frac{(k-1)\pi}{4}\right)^{-(n+A(k-1)+1)}\ge\frac{\Gamma(n+Ak+1)}{\pi n!}\left(\frac{k\pi}{4}\right)^{-(n+Ak+1)},$$ or equivalently,
$$\left(k-1\right)^{-(n+A(k-1)+1)}(\frac{\pi}{4})^A\ge(n+Ak)\cdots(n+Ak-A+1)\left(k\right)^{-(n+Ak+1)}.$$
This inequality is true for all integers $n, k\ge2,$ since $(n+Ak)\cdots(n+Ak-A+1)$ may be viewed as polynomial in $k$ with degree $A\ge1,$ and hence the result follows. Another way one could establish the inequality follows from observing the function
$$\frac{\Gamma(n+Ak+1)}{\pi n!}\left(\frac{k\pi}{4}\right)^{-(n+Ak+1)}$$ decreases as $k$ increases. We can also establish this property using another curious result which appears to follow from [10, pg.83, eq.(3.1.13)], which we state in the following theorem.
\begin{theorem} If $X_k$ is the general size-biased distribution corresponding to the density function  $x^{-1}v_k(x),$ then for integers $m\ge1,$ $z>0,$
$$\mathop{\mathbb{E}(X_kv_m(X_kz))}=v_{m+k}(z).$$
\end{theorem}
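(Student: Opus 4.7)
The plan is to unfold the expectation into a multiplicative convolution and then evaluate its Mellin transform using the functional equation $\xi(s)=\xi(1-s)$.

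First, since $X_k$ has density $x^{-1}v_k(x)$, the expectation reduces as
\[
\mathbb{E}(X_k v_m(X_k z)) \;=\; \int_0^\infty x\cdot v_m(xz)\cdot x^{-1}v_k(x)\,dx \;=\; \int_0^\infty v_m(xz)\,v_k(x)\,dx,
\]
so the content of the theorem is the convolution identity
\[
\int_0^\infty v_m(xz)\,v_k(x)\,dx \;=\; v_{m+k}(z).
\]

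Second, I would compute the Mellin transform of the left-hand side as a function of $z$. A change of variable $u=xz$ in the standard way gives
\[
\int_0^\infty z^{s-1}\!\!\int_0^\infty v_m(xz)\,v_k(x)\,dx\,dz \;=\; M[v_m](s)\,M[v_k](1-s),
\]
which is exactly the Mellin form of Parseval cited from Titchmarsh [10, eq.(3.1.13)]. Since by construction $M[v_j](s)=2^j\xi^j(s)$, and the Riemann functional equation gives $M[v_k](1-s)=2^k\xi^k(1-s)=2^k\xi^k(s)$, the product collapses to $2^{m+k}\xi^{m+k}(s)$, which is precisely the Mellin transform of $v_{m+k}(z)$. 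Mellin inversion then yields the theorem.

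The only real obstacle is justifying the Fubini step and the absolute convergence needed to apply Parseval in this form. Here the rapid decay estimate (1.5), $\xi(\tfrac12+it)=O(|t|^A e^{-\pi|t|/4})$, ensures that on the critical line $2^j\xi^j(\tfrac12+it)$ is Schwartz in $t$, so $v_j$ and all convolutions appearing are well-defined and the Mellin transforms are absolutely integrable on $\Re(s)=\tfrac12$. This makes the interchange of the $x$- and $z$-integrations, together with the subsequent contour shift to $(c)=(\tfrac12)$, legitimate, and completes the argument.
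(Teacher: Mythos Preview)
Your argument is correct and follows essentially the paper's route: both reduce the expectation to the convolution $\int_0^\infty v_m(xz)\,v_k(x)\,dx$ and identify it with $v_{m+k}(z)$ via Mellin--Parseval. The only difference is packaging: the paper starts from the inverse-Mellin definition of $v_{m+k}$ and applies [10, eq.(3.1.12)] in a single line, whereas you compute the forward Mellin transform of the convolution and invoke $\xi(s)=\xi(1-s)$ explicitly---this functional equation is tacitly present in the paper's Parseval step as well, since the raw multiplicative convolution yields $\int_0^\infty v_m(z/x)\,v_k(x)\,\frac{dx}{x}$ and one needs $v_k(1/x)=x\,v_k(x)$ to recast it in the stated form. (Minor slip: reference [10] is Paris--Kaminski, not Titchmarsh.)
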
 
\begin{proof}
By [10, pg.83, eq.(3.1.12)],
$$\begin{aligned} &v_{m+k}(z)=\frac{1}{2\pi i}\int_{(c)}2^{m+k}\xi^{m+k}(s)z^{s-1}ds \\
&=\frac{1}{2\pi i}\int_{(c)}2^{m+k}\xi^{m}(s)\xi^{k}(s)z^{s-1}ds\\
&=\int_{0}^{\infty} v_k(x)v_m(xz)dx\\
&=\mathop{\mathbb{E}(X_kv_m(X_kz))}. \end{aligned}$$
\end{proof}
If we apply Theorem 1.1 with $m=1,$ and $v_1(x)=\Theta(x)=O(x^Ae^{-x^2}),$ we see that
$$\begin{aligned} &v_k(z)=\int_{0}^{\infty} v_{k-1}(x)v_1(xz)dx \\
&=\int_{0}^{\infty} v_1(x)v_{k-1}(xz)dx\\
&=O\left(\int_{0}^{\infty} v_{k-1}(xz)x^Ae^{-x^2}dx\right). \end{aligned}$$
Since $v_k(x)\rightarrow0,$ when $x\rightarrow\infty,$ $v_k(x_1)\le v_k(x_2),$ when $x_1\ge x_2>0.$ That is, $v_k(x)$ is monotonically decreasing from [10, pg.80, eq.(3.1.3)] and absolute convergence of (1.1). Therefore, $v_k(x_1x_2)\le v_k(x_2),$ and consequently,
$$\int_{0}^{\infty} v_{k-1}(xz)v_1(x)dx=O\left(\int_{0}^{\infty} v_{k-1}(xz)x^Ae^{-x^2}dx\right)=O( v_{k-1}(z)).$$ Now applying the positivity of $v_k(x)$ gives $v_k(x)\le v_{k-1}(x),$ and again gives the majorizing property. 
\par An interesting remark regarding Theorem 1.1 should be noted. In particular, the only distribution we are aware that has the expected vaue of its density function equal to its density at any point is the uniform distribution. That is if $U$ is a uniformly distributed random variable with density function $u(x)=(b-a)^{-1},$ on $[a,b],$ then $\mathop{\mathbb{E}(u(Uz))}=u(z).$ Hence in this sense, Theorem 1.1 appears to be a uniform-like property, but with the obvious difference that its value is not constant across support. 

\section{A functional relation for the general size-biased distribution}

In this section we will utilize the main result of our note [8] to obtain a general relation for expected values of our general size-biased distribution. Let $g(z)$ be an analytic function for $\Re(z)>0$ which is entire of order one. Let $H(t)$ be a locally integratable function which satisfies the condition $H(t)=t^{-1}H(t^{-1}),$ and has the Mellin transform $g(s)$ for $s\in\mathbb{C}.$ Then by [8, Proposition 1.1] we have that 
$$f(z,y)=\frac{1}{x}\int_{0}^{\infty}\frac{zt}{zt+1}t^{-y/x+1/x-1}H(t^{1/x})dt,$$ is a solution to the functional equation,
\begin{equation} f(z,y+x)+zf(z,y)=zg(y).\end{equation} Here $0<x<\pi/r$ such that for small $\delta>0,$
\begin{equation}|g(s)|<Ce^{(r-\delta)|s|}.\end{equation}

\begin{theorem} Let $x$ be chosen to satisfy (2.2) in the case of $g(s)=\xi^{k}(s).$ If $X_k$ is the general size-biased distribution corresponding to the density function  $x^{-1}v_k(x),$ then for $y, z>0,$ then
$$\mathop{\mathbb{E}\left(\frac{zX_k^{1-y}}{zX_k^x+1}\right)}+z\mathop{\mathbb{E}\left(\frac{zX_k^{1+x-y}}{zX_k^x+1}\right)}=z\mathop{\mathbb{E}\left(X_k^{y}\right)}=z\xi^k(y).$$
\end{theorem}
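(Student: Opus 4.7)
The plan is to instantiate [8, Proposition 1.1] with $g(s)=2^{k}\xi^{k}(s)$ and its Mellin partner $H(t)=v_{k}(t)$, and then recognize the resulting Ferrar integral as an expectation against the density $x^{-1}v_{k}(x)$ of $X_{k}$. By (1.1) and Mellin inversion, $v_{k}$ is precisely the inverse Mellin transform of $g$, so the Mellin-transform hypothesis of [8, Proposition 1.1] is immediate. The Ferrar symmetry $H(t)=t^{-1}H(t^{-1})$ is the relation $xv_{k}(x)=v_{k}(1/x)$, which is exactly the size-biased property already used for $X_{k}$ and which follows from (1.1) by substituting $s\mapsto 1-s$ and applying $\xi(s)=\xi(1-s)$. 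Since $\xi$ is entire of order one, so is every power $\xi^{k}$, and Titchmarsh's estimate (1.5) lets us choose $x$ in the admissible range of (2.2) corresponding to $g=2^{k}\xi^{k}$.

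With those hypotheses verified, [8, Proposition 1.1] delivers
$$f(z,y)=\frac{1}{x}\int_{0}^{\infty}\frac{zt}{zt+1}\,t^{-y/x+1/x-1}\,v_{k}(t^{1/x})\,dt,$$
which satisfies $f(z,y+x)+zf(z,y)=zg(y)$. I would then substitute $u=t^{1/x}$, so $t=u^{x}$ and $dt=xu^{x-1}\,du$; the factor $1/x$ in front cancels, the exponent $t^{-y/x+1/x-1}$ becomes $u^{-y+1-x}$, and the integral collapses to
$$f(z,y)=\int_{0}^{\infty}\frac{z\,u^{1+x-y}}{zu^{x}+1}\cdot\frac{v_{k}(u)}{u}\,du=\mathop{\mathbb{E}\!\left(\frac{zX_{k}^{1+x-y}}{zX_{k}^{x}+1}\right)},$$
because the density of $X_{k}$ is $u^{-1}v_{k}(u)$. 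Repeating the substitution with $y$ replaced by $y+x$ yields $f(z,y+x)=\mathop{\mathbb{E}\bigl(zX_{k}^{1-y}/(zX_{k}^{x}+1)\bigr)}$, while $g(y)$ equals the $y$-th moment of $X_{k}$ since $\int_{0}^{\infty}u^{y-1}v_{k}(u)\,du$ is the Mellin transform of $v_{k}$ at $y$. Inserting these three evaluations into the Ferrar identity delivers the theorem.

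The main obstacle is technical: verifying the growth conditions that permit [8, Proposition 1.1] to be applied to $g=2^{k}\xi^{k}$, and ensuring that the integral defining $f(z,y)$ converges absolutely after the change of variable to $u$. Both issues reduce to the Titchmarsh bound (1.5), which produces an effective exponential rate of $k\pi/4$ for $\xi^{k}$ on the critical line and thus an admissible window for $x$ for every fixed $k\ge 1$; the integrability in $u$ of $v_{k}$ is already known from [9]. Once these bookkeeping points are settled, the argument is a single substitution followed by a direct comparison with the Ferrar functional equation (2.1).
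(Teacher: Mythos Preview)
Your proposal is correct and follows essentially the same route as the paper: invoke [8, Proposition~1.1] with $H=v_k$, perform the substitution $u=t^{1/x}$ to turn the Ferrar integral into $\int_0^\infty \frac{zu^{x}}{zu^{x}+1}u^{-y}v_k(u)\,du$, and read this off as an expectation against the density $u^{-1}v_k(u)$. You are in fact slightly more careful than the paper in tracking the normalizing factor $2^{k}$ (the Mellin transform of $v_k$ is $2^{k}\xi^{k}$, not $\xi^{k}$) and in spelling out the evaluation of $f(z,y+x)$, but the argument is the same.
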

\begin{proof} Note that since $\xi(s)$ is entire of order one, so is $\xi(s)^k$ [11, pg. 29, Theorem 2.12]. Choosing $H(t)=v_k(t),$ and a change of variables tells us that
$$\begin{aligned}&\frac{1}{x}\int_{0}^{\infty}\frac{zt}{zt+1}t^{-y/x+1/x-1}H(t^{1/x})dt\\ 
&=\int_{0}^{\infty}\frac{zt^x}{zt^x+1}t^{-y}v_k(t)dt\\
&=\mathop{\mathbb{E}\left(\frac{zX_k^{1+x-y}}{zX_k^x+1}\right)}. \end{aligned}$$
The result now follows from (2.1).
\end{proof}
Note that since [11, pg.255] the Riemann hypothesis is equivalent to $\xi(\frac{1}{2}+it)$ having only real zeros, the Riemann hypothesis is equivalent to
$$\mathop{\mathbb{E}\left(\frac{zX_k^{1/2-iy}}{zX_k^x+1}\right)}+z\mathop{\mathbb{E}\left(\frac{zX_k^{1/2+x-iy}}{zX_k^x+1}\right)}=0$$
for real $y$ only.

\section{A relationship with the Riemann hypothesis}

In [7, eq.(1.1)] we replace $x$ with $x^2$ to find
\begin{equation}\Xi_{\lambda}(t)=2\int_{0}^{\infty}e^{\lambda(\log(x))^2+it\log(x)}\sqrt{x}\Theta(x)\frac{dx}{x}=2\int_{-\infty}^{\infty}e^{\lambda u^2}e^{u/2}\Theta(e^{u})e^{itu}du.\end{equation}
The integral on the right side of (3.1) has been presented in [4] in an effort to approach the Riemann hypothesis. It is known in [7, pg.282] that $\Xi_{\lambda}(t)$ has only real zeros for $\lambda\ge 1/8,$ and the Riemann hypothesis is equivalent to $\Xi_{0}(t)$ having only real zeros  (using (1.2) and [11, pg.255], for example). The approach uses [4, Theorem 13], which in short states that if a Fourier transform 
$$F(t)=\int_{-\infty}^{\infty}f(u)e^{itu}du,$$
satifies certain growth conditions (see [4, Theorem 10]) with roots in the strip $|\Im(t)|\le\Delta,$ then the function
$$\int_{-\infty}^{\infty}e^{\frac{\lambda^2}{2} u^2}f(u)e^{itu}du,$$
has roots in the strip $|\Im(t)|\le\sqrt{\max{(\Delta^2-\lambda^2,0)}}.$ We are able to extend the results in [7] for our distribution. Recalling our previous observation that $v_k(x)\le v_{k-1}(x)\le v_1(x),$ we can see by taking expected values that
$$\mathop{\mathbb{E}\left(\sqrt{X_k}e^{\lambda(\log(X_k))^2+it\log(X_k)}\right)}\le \mathop{\mathbb{E}\left(\sqrt{X_1}e^{\lambda(\log(X_1))^2+it\log(X_1)}\right)}.$$ First note that both sides of this inequality represent an entire function. It would follow naturally from Liouville's theorem that should the right side be zero for infinitely many $t$ so should the left side.
\begin{theorem} If $X_k$ is the general size-biased distribution corresponding to the density function  $x^{-1}v_k(x),$ then
\begin{equation}\mathop{\mathbb{E}\left(\sqrt{X_k}e^{\lambda(\log(X_k))^2+it\log(X_k)}\right)}=0,\end{equation}
is true for only real $t$ when $\lambda\ge1/8.$ Furthermore, (3.2) is true when $\lambda=0$ infinitely often.
\end{theorem}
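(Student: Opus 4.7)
The plan is to recast the expectation as a real-line Fourier integral of the exact form used in [7], with $v_k$ playing the role of $\Theta=v_1$, and then invoke the de Bruijn-type result [4, Theorem 13] for the $\lambda\ge 1/8$ claim and Hardy's theorem for the $\lambda=0$ claim. Concretely, unfolding the density and substituting $u=\log x$ rewrites the expectation as
$$\mathop{\mathbb{E}(\sqrt{X_k}e^{\lambda(\log X_k)^{2}+it\log X_k})}=\int_{-\infty}^{\infty}e^{\lambda u^{2}}\phi(u)e^{itu}\,du,\qquad \phi(u):=e^{u/2}v_k(e^{u}).$$
The functional equation $xv_k(x)=v_k(1/x)$, equivalent to the size-biased property of $X_k$ or to $\xi^{k}(s)=\xi^{k}(1-s)$ via Mellin inversion of (1.1), shows $\phi$ is even in $u$, so we are placed precisely in the setting that [7] treats when $k=1$.

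For $\lambda=0$, (1.1) together with Mellin inversion on $\Re(s)=1/2$ identifies the integral with $2^{k}\xi^{k}(1/2+it)$. Since $\xi^{k}$ vanishes exactly where $\xi$ does, Hardy's theorem supplies infinitely many real $t$ at which this Fourier transform vanishes, yielding the ``infinitely often'' claim.

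For $\lambda\ge 1/8$, I would match the exponent $\lambda u^{2}$ with the normalization $\mu^{2}u^{2}/2$ of [4, Theorem 13] by setting $\mu=\sqrt{2\lambda}$. The nontrivial zeros of $\xi(s)$ are confined to the critical strip $0\le\Re(s)\le 1$, so those of $\xi(1/2+it)$ lie in $|\Im(t)|\le \Delta=1/2$. The theorem then forces the zeros of the twisted transform into $|\Im(t)|\le\sqrt{\max(\Delta^{2}-\mu^{2},0)}=\sqrt{\max(1/4-2\lambda,0)}$, which collapses to the real axis precisely when $\lambda\ge 1/8$.

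The main technical obstacle is verifying the growth-class hypotheses of [4, Theorem 10] for $\phi(u)=e^{u/2}v_k(e^{u})$ for every $k$. In the case $k=1$ this is handled in [7] using $\Theta(x)=O(x^{A}e^{-x^{2}})$. For $k\ge 2$ I would either estimate $\xi^{k}(1/2+it)$ directly via (1.5), which through Fourier inversion gives the rapid decay of $\phi$ required by de Bruijn's class, or iterate Theorem 1.1 to express $v_k$ as a convolution of $\Theta$ against $v_{k-1}$ and inherit the bounds inductively from the $k=1$ case; either route is essentially routine once $k=1$ is in hand, but it is the one technical point where care is needed.
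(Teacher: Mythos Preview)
Your proposal is correct and follows essentially the same approach as the paper: recast the expectation as the Fourier integral of the even function $\phi(u)=e^{u/2}v_k(e^{u})$, identify the $\lambda=0$ case with $2^{k}\xi^{k}(\tfrac12+it)$ so that Hardy's theorem gives infinitely many real zeros, and then apply the de~Bruijn strip-shrinking principle (the paper cites it via [7, Proposition~A] rather than [4, Theorem~13] directly) with $\Delta=1/2$ to force all zeros real once $\lambda\ge1/8$. The paper dispatches the growth-class hypotheses more tersely by pointing to the arguments in [7, pg.~282], whereas you spell out the two natural routes via (1.5) or the convolution of Theorem~1.1; this is the same verification, just made more explicit.
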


\begin{proof} First note that $e^{u/2}v_k(e^{u})$ represents an even function which tends to $0$ as $u\rightarrow\pm\infty$ thereby satisfying [4, Theorem 10]. Using the same arguments as those in [7, pg.282] it is seen that for $k\ge1$ the more general function,
\begin{equation}\Xi_{\lambda,k}(t):=2\int_{0}^{\infty}e^{\lambda(\log(x))^2+it\log(x)}\sqrt{x}v_k(x)\frac{dx}{x}=2\int_{-\infty}^{\infty}e^{\lambda u^2}e^{u/2}v_k(e^{u})e^{itu}du,\end{equation}
is an even entire function of finite order. By the Paley-Wiener theorem, (3.3) cannot be of finite exponential type. An entire function with finite order and maximal type and has infinitely many zeros by Hadamard's factorization theorem. Therefore, [7, Proposition A] holds true for the function (3.3) as well. [7, Proposition A] says that if $\lambda_1\le\lambda_2,$ $\Delta\ge0,$ and the zeros of $\Xi_{\lambda_1}$ lie in $\{t: |\Im(t)|\le\Delta\},$ then those of  $\Xi_{\lambda_2}$ lie in $\{t: |\Im(t)|\le\bar{\Delta}\},$  where $\bar{\Delta}=\sqrt{\max{(\Delta^2-2(\lambda_2-\lambda_1,0)}}.$ We put $\lambda_1=0,$ and note that $\Xi_0(t)$ roots lie in $\{t: |\Im(t)|\le1/2\},$ by [5, pg.301], and $\Xi_{0,k}(t)$ are the same roots. Hence $\sqrt{\max{(1/4-2\lambda_2,0)}}$ gives $\lambda\ge1/8.$ Note that since $\Xi_{0,1}(t)$ is an entire function of maximal type, so is $\Xi_{0,k}(t)$ by [11, pg.30] $|t|>A,$
$$\Xi_{0,k}(t)=\Xi_{0,1}(t)^k=O(e^{kA|t|\log|t|}).$$

 The second statement in the theorem follows from Hardy's theorem [11, pg.256--257].
\end{proof}
It is now well-known that (3.1) satisfies the backward heat equation, and so similarly for (3.3) it is straigtforward to see,
\begin{equation}\frac{\partial^2 \Xi_{\lambda,k}(t)}{\partial t^2}=-\frac{\partial \Xi_{\lambda,k}(t)}{\partial \lambda}.\end{equation}
The additional boundary condition $ \Xi_{0,k}(t)=0,$ only if $t\in\mathbb{R}$ would be equivalent to the Riemann hypothesis. It is known that the backward heat equation and Jacobi theta function have a relationship to Brownian motion [3]. The Jacobi theta function has also appeared in the context of diffusion processes as a solution to another boundary value problem in probability theory [2, Theorem 21.6.1], [2, pg.609]. However, the fundamental solution differs from (3.1), as the construction of $\Xi_{\lambda,k}(t)$ has come about in a superficial manner in applying a theorem from analysis concerning zeros of certain Fourier transforms. In fact, the backward heat equation is not well posed. It would be interesting to establish a detailed comparison with the fundamental solution.

1390 Bumps River Rd. \\*
Centerville, MA
02632 \\*
USA \\*

ul. A. E. Ody\'{n}ca 47 \\*
02-606 Warsaw\\*
Poland\\*
E-mail: alexpatk@hotmail.com, alexepatkowski@gmail.com 
 \\*
Competing interests: The author declares none.

\end{document}